\documentclass[10pt, a4paper]{amsart}
\usepackage{amsthm}
\usepackage[dvipdfmx]{graphicx}
\usepackage{caption}
\usepackage{float}
{\theoremstyle{definition}
\newtheorem{dfn}{Definition}}
\newtheorem{prop}[dfn]{Proposition}

{\theoremstyle{definition}
}
\newtheorem{lem}[dfn]{Lemma}

\newtheorem{conj}[dfn]{Conjecture}
{\theoremstyle{definition}
\newtheorem{exa}[dfn]{Example}}

\usepackage[top=20truemm,bottom=20truemm,left=25truemm,right=25truemm]{geometry}
\usepackage{amsmath,amssymb,amscd,bm,graphicx}
\usepackage{color}
\usepackage{comment}
\usepackage[colorlinks=true,
    citecolor=denim,
    linkcolor=alizarin,
    urlcolor=lightseagreen]{hyperref}
\definecolor{alizarin}{rgb}{0.82, 0.1, 0.26}
\definecolor{azure(colorwheel)}{rgb}{0.0, 0.5, 1.0}
\definecolor{blue(pigment)}{rgb}{0.2, 0.2, 0.6}
\definecolor{denim}{rgb}{0.08, 0.38, 0.74}
\definecolor{mint}{rgb}{0.24, 0.71, 0.54}
\definecolor{parisgreen}{rgb}{0.31, 0.78, 0.47}
\definecolor{persiangreen}{rgb}{0.0, 0.65, 0.58}
\definecolor{seagreen}{rgb}{0.18, 0.55, 0.34}
\definecolor{shamrockgreen}{rgb}{0.0, 0.62, 0.38}
\definecolor{green(pigment)}{rgb}{0.0, 0.65, 0.31}
\definecolor{cadmiumgreen}{rgb}{0.0, 0.42, 0.24}
\definecolor{lightseagreen}{rgb}{0.13, 0.7, 0.67}
\definecolor{mediumseagreen}{rgb}{0.24, 0.7, 0.44}
\definecolor{pinegreen}{rgb}{0.0, 0.47, 0.44}
\definecolor{tealgreen}{rgb}{0.0, 0.51, 0.5}

\newcommand{\bC}{\mathbb{C}}

\newcommand{\bZ}{\mathbb{Z}}

\begin{document}

\title{ASYMPTOTIC BEHAVIOR OF TWISTED ALEXANDER INVARIANTS
FOR HYPERBOLIC KNOTS WITH AT MOST SIX CROSSINGS}
\subjclass[2020]{Primary 57K14; Secondary 57K32}
\keywords{twisted Alexander invariants, hyperbolic knots, holonomy representations, hyperbolic volume,  complex volume, Chern-Simons invariant}

% The first author
\author[A. Aso]{Airi Aso}
\address{Mathematical Institute, Tohoku University,
6--3, Aramaki Aza--Aoba, Aoba--ku,Sendai 980--8578, JAPAN} 
\email{airi.aso.c3@tohoku.ac.jp}

% Abstract
\begin{abstract}
Let $K$ be a hyperbolic knot and let $\rho_n$ be the $n$-dimensional irreducible representation induced from a lift of its holonomy representation. Motivated by Goda's asymptotic volume formula and the complexified Volume Conjecture, we study whether the higher-dimensional twisted Alexander invariants associated with $\rho_n$ detect the complex volume of the knot complement.

We compute
$$
\frac{\pi}{2}
\log
\left(
\frac{A_{K,n-2}(1)A_{K,n+2}(1)}
{A_{K,n}(1)^2}
\right)
$$
for all hyperbolic knots with at most six crossings. Our numerical experiments indicate that these values approach
$$
\operatorname{Vol}(S^3\setminus K)
+i\,2\pi^2\operatorname{CS}(S^3\setminus K)
$$
modulo $i\pi^2\bZ$. Based on these computations, we propose a complexified analogue of Goda's asymptotic volume formula.
\end{abstract}

% \maketitle is after abstract
\maketitle

\section{Introduction}

The twisted Alexander polynomial is a generalization of the classical Alexander polynomial obtained by incorporating a linear representation of the knot group. Twisted Alexander polynomials were introduced in related forms by Lin\cite{Lin} and Wada\cite{Wada}. Lin considered twisted Alexander polynomials associated with representations of knot groups, while Wada gave a definition for finitely presentable groups. In the present paper, we use Wada's definition of the twisted Alexander polynomial. 

An important relation between twisted Alexander polynomials and Reidemeister torsion was established by Kitano, who showed that, for knot groups, Wada's twisted Alexander polynomial can be interpreted in terms of Reidemeister torsion \cite{Kitano}. Since then, twisted Alexander polynomials have been extensively studied and have proved useful in detecting topological properties of knots and \(3\)-manifolds, such as fiberedness and the Thurston norm. For hyperbolic knots, Dunfield, Friedl and Jackson studied twisted Alexander polynomials associated with a lift of the holonomy representation and demonstrated through extensive computations that these invariants contain strong topological information \cite{DFJ}.

The relation with hyperbolic geometry becomes particularly significant when one considers higher-dimensional representations induced from the holonomy representation. Let \(K\) be a hyperbolic knot and let
\[
\rho:\pi_1(S^3\setminus K)\longrightarrow SL(2,\mathbb C)
\]
be a lift of its holonomy representation. For each \(n\geq 2\), let
$$
\sigma_n:SL(2,\mathbb C)\longrightarrow SL(n,\mathbb C)
$$
be the \(n\)-dimensional irreducible representation, and put
$$
\rho_n=\sigma_n\circ\rho.
$$
Menal-Ferrer and Porti studied the higher-dimensional Reidemeister torsion invariants associated with these representations and showed that their asymptotic behavior is governed by the hyperbolic volume of the \(3\)-manifold \cite{MenalFerrerPorti}.

On the other hand, Porti studied non-acyclic Reidemeister torsion associated with the adjoint representation of the holonomy representation \cite{Porti}, and Yamaguchi described its relation with the behavior of the corresponding twisted Alexander invariant at \(t=1\) \cite{Yamaguchi}. 
Goda observed that Yamaguchi's method can be applied to the higher-dimensional Reidemeister torsions of Menal-Ferrer and Porti and obtained a volume formula in terms of twisted Alexander invariants \cite{Goda}.

We recall Goda's formula. Let \(\Delta_{K,\rho_n}(t)\) denote the twisted Alexander invariant associated with \(\rho_n\) in Wada's notation. For \(k>1\), set
$$
A_{K,2k}(t)
=
\frac{\Delta_{K,\rho_{2k}}(t)}
     {\Delta_{K,\rho_2}(t)}
\qquad\text{and}\qquad
A_{K,2k+1}(t)
=
\frac{\Delta_{K,\rho_{2k+1}}(t)}
     {\Delta_{K,\rho_3}(t)}.
$$
Then Goda proved
\begin{equation}\label{eq Goda}
\lim_{k\to\infty}
\frac{\log|A_{K,2k+1}(1)|}{(2k+1)^2}
=
\lim_{k\to\infty}
\frac{\log|A_{K,2k}(1)|}{(2k)^2}
=
\frac{\operatorname{Vol}(K)}{4\pi},
\end{equation}
where \(\operatorname{Vol}(K)\) denotes the hyperbolic volume of \(S^3\setminus K\). In the odd-dimensional case, the numerator and denominator have a common zero at \(t=1\), and \(A_{K,2k+1}(1)\) is understood as the value of the resulting quotient after cancellation.
The appearance of the hyperbolic volume in (1) is reminiscent of the Volume Conjecture for the colored Jones polynomial \cite{MurakamiMurakami}. Let \(J_N(K;q)\) denote the \(N\)-colored Jones polynomial of a hyperbolic knot \(K\). The Volume Conjecture predicts
\begin{equation}\label{eq MM}
\lim_{N\to\infty}
\frac{2\pi
\log
\left|
J_N\left(K;e^{2\pi i/N}\right)
\right|}
{N}
=
\operatorname{Vol}(S^3\setminus K).
\end{equation}
Thus, in both the colored Jones polynomial and the twisted Alexander invariant, the hyperbolic volume appears as the leading term of an asymptotic formula involving the absolute value of a knot invariant.

Moreover, a complexification of the Volume Conjecture was proposed in which the absolute value is removed and the Chern--Simons invariant appears together with the hyperbolic volume \cite{MurakamiMurakamiOkamotoTakataYokota}. With the normalization used in the present paper, this takes the form
\begin{equation}\label{eq MMYetal}
\lim_{N\to\infty}
\frac{2\pi
\log
J_N\left(K;e^{2\pi i/N}\right)}
{N}
=
\operatorname{Vol}(S^3\setminus K)
+
i\,2\pi^2 CS(S^3\setminus K)
\pmod {i\pi^2\mathbb Z}.
\end{equation}
The quantity
\begin{equation*}
\mathbb V(K)
=
\operatorname{Vol}(S^3\setminus K)
+
i\,2\pi^2 CS(S^3\setminus K)
\end{equation*}
will be referred to as the complex volume of the knot complement, with the above ambiguity understood.

Equations (1)--(3) suggest a natural question. Goda's formula extracts the hyperbolic volume from the asymptotic behavior of the absolute values \(|A_{K,n}(1)|\). What happens if the absolute value is removed and the complex values \(A_{K,n}(1)\) themselves are considered? In view of the complexification of the Volume Conjecture, it is natural to ask whether the phase of the higher-dimensional twisted Alexander invariants contains the Chern--Simons invariant and whether a complexification of Goda's formula detects the complex volume.

A direct replacement of \(\log|A_{K,n}(1)|\) by \(\log A_{K,n}(1)\), however, does not immediately give the desired asymptotic expression because the complex logarithm is multivalued. In particular, if one always takes the principal branch, the imaginary part of
$$
\frac{4\pi}{n^2}\log A_{K,n}(1)
$$
is bounded in absolute value by \(4\pi^2/n^2\) and therefore tends to zero. Thus the principal logarithm cannot directly retain a nonzero Chern--Simons term in this normalization.
We instead consider a second difference. Suppose formally that
\begin{equation*}
A_{K,n}(1)
\sim
\exp\left(
\frac{n^2}{4\pi}\mathbb V(K)
\right).
\end{equation*}
Then
$$
\frac{A_{K,n-2}(1)A_{K,n+2}(1)}
     {A_{K,n}(1)^2}
\sim
\exp\left(
\frac{2}{\pi}\mathbb V(K)
\right),
$$
since
$$
(n-2)^2+(n+2)^2-2n^2=8.
$$
Consequently, the quantity
\begin{equation}\label{AA}
\frac{\pi}{2}
\log
\left(
\frac{A_{K,n-2}(1)A_{K,n+2}(1)}
     {A_{K,n}(1)^2}
\right)
\end{equation}
is expected to recover the complex volume. Moreover, changing the branch of the logarithm changes \ref{AA} by an element of
\(
i\pi^2\mathbb Z,
\)
which agrees precisely with the ambiguity occurring in the complex volume.

In this paper, we investigate this expectation by explicit computer experiments. We compute the higher-dimensional twisted Alexander invariants associated with the holonomy representations for all hyperbolic knots with at most six crossings,
$$
4_1,\qquad 5_2,\qquad 6_1,\qquad 6_2,\qquad 6_3.
$$
Our computations indicate that, for both the even- and odd-dimensional sequences, the values in \ref{AA} approach the complex volume of the corresponding knot complement. In particular, the real parts approach the hyperbolic volumes, while the imaginary parts agree numerically with the corresponding Chern--Simons invariants modulo the expected ambiguity.
These observations lead us to the following conjecture.

\begin{conj}%main result
Let \(K\) be a hyperbolic knot in \(S^3\). For each fixed parity,
\begin{equation}\label{conj A}
\lim_{n\to\infty}
\frac{\pi}{2}
\log
\left(
\frac{A_{K,n-2}(1)A_{K,n+2}(1)}
     {A_{K,n}(1)^2}
\right)
=
\mathbb V(K)
\qquad
\text{in }
\mathbb C/i\pi^2\mathbb Z.
\end{equation}
Equivalently,
$$
\lim_{n\to\infty}
\frac{\pi}{2}
\log
\left(
\frac{A_{K,n-2}(1)A_{K,n+2}(1)}
     {A_{K,n}(1)^2}
\right)
\equiv
\operatorname{Vol}(S^3\setminus K)
+
i\,2\pi^2CS(S^3\setminus K)
\pmod{i\pi^2\mathbb Z},
$$
where the limit is taken separately through even and odd integers.
\end{conj}

Thus, the numerical evidence presented here suggests a close parallel between the Volume Conjecture and the asymptotic behavior of higher-dimensional twisted Alexander invariants. In both settings, taking absolute values yields the hyperbolic volume, whereas retaining the complex phase appears to recover the full complex volume.

The paper is organized as follows. We first recall Wada's definition of the twisted Alexander polynomial. We then review the irreducible \(n\)-dimensional representations of \(SL(2,\mathbb C)\) and the higher-dimensional representations \(\rho_n\) induced from a lift of the holonomy representation, and give an explicit formula for their matrix entries. Using these formulas, we compute the twisted Alexander invariants for the hyperbolic knots with at most six crossings. Finally, we study the asymptotic behavior of the normalized invariants \(A_{K,n}(1)\), compare the resulting numerical values with the hyperbolic volumes and Chern--Simons invariants of the knot complements, and present the numerical evidence for Conjecture 1.

\section{Acknowledgments}

The author would like to thank Professor Yoshiyuki Yokota for suggesting the initial idea that led to this work. The author is also grateful to Professor Hirotaka Akiyoshi for pointing out the ambiguity arising from the choice of branch of the complex logarithm. The author would also like to thank Professor Hiroshi Goda for helpful discussions and for answering many questions concerning his work on higher-dimensional twisted Alexander invariants.

\section{Twisted Alexander polynomials}

In this section, we briefly recall Wada's definition of the twisted Alexander polynomial for a knot.
Let $K$ be a knot in $S^3$, and let
$$
G(K):=\pi_1(S^3\setminus K)
=
\left\langle
x_1,\ldots,x_m
\ \middle|\
r_1,\ldots,r_{m-1}
\right\rangle
$$
be a deficiency-one presentation of the knot group. Let $F_m$ be the free group generated by $x_1,\ldots,x_m$, and let
$$
\phi:F_m\longrightarrow G(K)
$$
be the natural epimorphism.
Let
$$
\mathfrak a:G(K)\longrightarrow \langle t\rangle
$$
be the abelianization homomorphism, where $\langle t\rangle$ denotes the infinite cyclic multiplicative group generated by $t$. We choose the generators so that
$$
\mathfrak a(x_j)=t
$$
for each $j$.

Let
$$
\rho:G(K)\longrightarrow SL_d(\bC)
$$
be a $d$-dimensional representation. Extending $\phi$, $\mathfrak a$, and $\rho$ linearly to the corresponding group rings, we obtain a ring homomorphism
$$
\Phi:
\bZ F_m
\longrightarrow
M_d\bigl(\bC[t^{\pm1}]\bigr)
$$
defined by
$$
\Phi
=
(\rho\otimes\mathfrak a)\circ\phi.
$$

For $1\leq i\leq m-1$ and $1\leq j\leq m$, put
$$
A_{i,j}
=
\Phi\left(
\frac{\partial r_i}{\partial x_j}
\right)
\in
M_d\bigl(\bC[t^{\pm1}]\bigr),
$$
where
$$
\frac{\partial}{\partial x_j}:
\bZ F_m\longrightarrow\bZ F_m
$$
denotes the Fox derivative with respect to $x_j$.

The resulting Alexander matrix is the $(m-1)\times m$ block matrix
$$
M_\rho
=
\begin{pmatrix}
A_{1,1} & \cdots & A_{1,m}\\
\vdots  &        & \vdots\\
A_{m-1,1} & \cdots & A_{m-1,m}
\end{pmatrix}.
$$
For an index $k$ such that
$$
\det\Phi(x_k-1)\neq0,
$$
let $M_{\rho,k}$ be the $d(m-1)\times d(m-1)$ matrix obtained from $M_\rho$ by deleting the $k$-th block column.

Then Wada's twisted Alexander polynomial associated with $\rho$ is defined by
$$
\Delta_{K,\rho}(t)
=
\frac{\det M_{\rho,k}}
{\det\Phi(x_k-1)}.
$$
up to multiplication by a monomial factor. In the numerical computations below, we use the representative determined by the fixed presentation and the chosen lift of the holonomy representation.

\section{Holonomy representations and higher-dimensional representations}

\subsection{Holonomy representations}

Let $K$ be a hyperbolic knot and put
$$
M_K=S^3\setminus K.
$$
The complete hyperbolic structure on $M_K$ determines a developing map
$$
\operatorname{dev}:\widetilde{M_K}\longrightarrow \mathbb H^3
$$
and a holonomy representation
$$
\operatorname{Hol}_K:
G(K)
\longrightarrow
\operatorname{Isom}^{+}(\mathbb H^3)
\cong
PSL_2(\bC).
$$
The holonomy representation is discrete and faithful, and it is determined up to conjugation in $PSL_2(\bC)$.

In our computations, explicit holonomy representations are obtained from ideal triangulations of the knot complements. 
We identify the tetrahedra in the universal cover with ideal tetrahedra in $\mathbb H^3$ by means of the developing map. 
By tracing the images of the initial and terminal points associated with the Wirtinger generators, we obtain the corresponding M\"obius transformations and hence explicit matrices representing the holonomy. The resulting matrices for each knot will be given in the numerical computations below.

The holonomy representation admits a lift to $SL_2(\bC)$. Such lifts are naturally associated with spin structures on $M_K$. 
Throughout this paper, we choose the lift
\[
\rho: G(K)\longrightarrow SL_2(\bC)
\]
for which the image of a meridian has trace $2$.
Since the Wirtinger generators are conjugate meridians, their images
also have trace $2$. Each of these images is a nontrivial parabolic
element and hence is conjugate in $SL_2(\bC)$ to
\[
\begin{pmatrix}
1&1\\
0&1
\end{pmatrix}.
\]
All the explicit holonomy matrices used in the computations below are taken with this convention.

\subsection{Higher-dimensional irreducible representations}

For $n\geq2$, let
$$
V_n
=
\left\langle
X^{n-1},X^{n-2}Y,\ldots,XY^{n-2},Y^{n-1}
\right\rangle
\subset\bC[X,Y]
$$
be the complex vector space of homogeneous polynomials in two variables $X$ and $Y$ of degree $n-1$. We denote its standard ordered basis by
$$
\mathcal B_n
=
\left(
X^{n-1},X^{n-2}Y,\ldots,XY^{n-2},Y^{n-1}
\right).
$$

For
$A\in SL_2(\bC)$ and $p\in V_n$,
define an action of $A$ on $V_n$ by
$$
(A\cdot p)
\begin{pmatrix}
X\\
Y
\end{pmatrix}
=
p\left(
A^{-1}
\begin{pmatrix}
X\\
Y
\end{pmatrix}
\right).
$$
This action gives the $n$-dimensional irreducible representation
$$
\sigma_n:
SL_2(\bC)
\longrightarrow
SL_n(\bC),
$$
where $\sigma_n(A)$ denotes the matrix of the linear transformation
$$
A\cdot(-):V_n\longrightarrow V_n
$$
with respect to the basis $\mathcal B_n$.

Composing the chosen lift of the holonomy representation with $\sigma_n$, we obtain
$$
\rho_n
=
\sigma_n\circ\rho:
G(K)
\longrightarrow
SL_n(\bC).
$$
These are the representations used to define the higher-dimensional twisted Alexander invariants considered in this paper.

\begin{exa}
Let $n=3$ and
$$
A=
\begin{pmatrix}
1&0\\
1&1
\end{pmatrix}
\in SL_2(\bC).
$$
Since
$$
A^{-1}
\begin{pmatrix}
X\\
Y
\end{pmatrix}
=
\begin{pmatrix}
X\\
Y-X
\end{pmatrix},
$$
we have
$$
X(Y-X)=-X^2+XY
$$
and
$$
(Y-X)^2=X^2-2XY+Y^2.
$$
Therefore, with respect to the basis
$$
\mathcal B_3=(X^2,XY,Y^2),
$$
we obtain
$$
\sigma_3(A)
=
\begin{pmatrix}
1&-1&1\\
0&1&-2\\
0&0&1
\end{pmatrix}.
$$
\end{exa}

For the explicit computations of the twisted Alexander invariants, it is useful to have a formula for the entries of $\sigma_n(A)$.

\begin{lem}\label{rho_n}
Let
$$
A=
\begin{pmatrix}
a&b\\
c&d
\end{pmatrix}
\in SL_2(\bC).
$$
Then
$$
\sigma_n(A)=(a_{ij})\in SL_n(\bC),
$$
where
$$
a_{ij}
=
\sum_{k\in I_{i,j}}
\frac{
(n-j)!(j-1)!(-1)^{i+j}
a^{i-k-1}b^kc^{j-i+k}d^{n-j-k}
}{
k!(n-j-k)!(i-k-1)!(j-i+k)!
},
$$
and
$$
I_{i,j}
=
\left\{
k\in\bZ
\ \middle|\
\max\{0,i-j\}
\leq k
\leq
\min\{i-1,n-j\}
\right\}.
$$
\end{lem}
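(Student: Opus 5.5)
The plan is to compute $\sigma_n(A)$ directly from the definition of the action on $V_n$, using the binomial theorem, and then read off coefficients. Write the basis of $V_n$ as $e_j=X^{n-j}Y^{j-1}$ for $1\le j\le n$, so that $\mathcal B_n=(e_1,\dots,e_n)$. The $(i,j)$ entry $a_{ij}$ of $\sigma_n(A)$ is then the coefficient of $e_i=X^{n-i}Y^{i-1}$ in $A\cdot e_j$.

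First I will compute the inverse, using $\det A=1$:
$$
A^{-1}=\begin{pmatrix} d&-b\\ -c&a\end{pmatrix},
\qquad
A^{-1}\begin{pmatrix}X\\ Y\end{pmatrix}=\begin{pmatrix}dX-bY\\ -cX+aY\end{pmatrix}.
$$
It follows that
$$
A\cdot e_j=(dX-bY)^{n-j}(-cX+aY)^{j-1}.
$$

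Next I will expand both factors binomially:
$$
(dX-bY)^{n-j}=\sum_{k=0}^{n-j}\binom{n-j}{k}(-1)^k d^{n-j-k}b^k X^{n-j-k}Y^k,
$$
$$
(-cX+aY)^{j-1}=\sum_{l=0}^{j-1}\binom{j-1}{l}(-1)^{j-1-l}a^l c^{j-1-l}X^{j-1-l}Y^l.
$$
The product contributes to the monomial $X^{n-i}Y^{i-1}$ exactly when $k+l=i-1$. So I will substitute $l=i-1-k$ and sum over $k$ alone.

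With this substitution the exponents become $a^{i-k-1}b^kc^{j-i+k}d^{n-j-k}$. The sign is $(-1)^{k+(j-1)-(i-1-k)}=(-1)^{j-i+2k}=(-1)^{i+j}$. The binomial coefficients combine to
$$
\frac{(n-j)!\,(j-1)!}{k!\,(n-j-k)!\,(i-k-1)!\,(j-i+k)!},
$$
which is exactly the summand in the statement.

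Finally I will determine the range of $k$. The constraints are $0\le k\le n-j$ from the first factor and $0\le i-1-k\le j-1$ from the second. Together these give $\max\{0,i-j\}\le k\le\min\{i-1,n-j\}$, which is precisely $I_{i,j}$. When $I_{i,j}$ is empty the entry is $0$, consistent with the empty sum.

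As a sanity check, I will compare with the example $n=3$, $A=\begin{pmatrix}1&0\\1&1\end{pmatrix}$ given just before the lemma. The only real point of care is the bookkeeping of the sign and the index ranges, since the paper's action uses $A^{-1}$ rather than $A$. That $\sigma_n(A)$ lies in $SL_n(\bC)$ is already implicit in the definition of $\sigma_n$ and needs no further argument.
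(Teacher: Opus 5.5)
Your proposal is correct and follows the paper's proof essentially step for step. You compute $A\cdot X^{n-j}Y^{j-1}=(dX-bY)^{n-j}(-cX+aY)^{j-1}$, expand both factors binomially, extract the coefficient of $X^{n-i}Y^{i-1}$ by imposing $k+\ell=i-1$, and obtain the range $I_{i,j}$ from $0\le k\le n-j$ and $0\le \ell\le j-1$. Your explicit sign computation $(-1)^{j-i+2k}=(-1)^{i+j}$ is a detail the paper leaves implicit.
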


\begin{proof}
The $(i,j)$-entry $a_{ij}$ is the coefficient of
$$
X^{n-i}Y^{i-1}
$$
in the image of the $j$-th basis element
$$
X^{n-j}Y^{j-1}
$$
under $\sigma_n(A)$.

Since
$$
A^{-1}
=
\begin{pmatrix}
d&-b\\
-c&a
\end{pmatrix},
$$
we have
$$
\sigma_n(A)
\left(X^{n-j}Y^{j-1}\right)
=
(dX-bY)^{n-j}(-cX+aY)^{j-1}.
$$

Expanding the two factors gives
$$
(dX-bY)^{n-j}
=
\sum_{k=0}^{n-j}
\frac{(n-j)!}{k!(n-j-k)!}
(-1)^k
b^k d^{n-j-k}
X^{n-j-k}Y^k
$$
and
$$
(-cX+aY)^{j-1}
=
\sum_{\ell=0}^{j-1}
\frac{(j-1)!}{\ell!(j-1-\ell)!}
(-1)^{j-1-\ell}
a^\ell c^{j-1-\ell}
X^{j-1-\ell}Y^\ell.
$$
A term in the product contributes to the coefficient of
$X^{n-i}Y^{i-1}$ precisely when
$$
k+\ell=i-1.
$$
Thus
$$
\ell=i-1-k.
$$
The conditions
$$
0\leq k\leq n-j
\qquad\text{and}\qquad
0\leq\ell\leq j-1
$$
are equivalent to
$$
\max\{0,i-j\}
\leq k
\leq
\min\{i-1,n-j\}.
$$
Substituting $\ell=i-1-k$ into the coefficient of the product yields
$$
a_{ij}
=
\sum_{k\in I_{i,j}}
\frac{
(n-j)!(j-1)!(-1)^{i+j}
a^{i-k-1}b^kc^{j-i+k}d^{n-j-k}
}{
k!(n-j-k)!(i-k-1)!(j-i+k)!
},
$$
as required.
\end{proof}

\section{Numerical experiments}

In this section, we present numerical experiments for hyperbolic knots with at most six crossings. We begin with the figure-eight knot $4_1$ as a preliminary example. Since the Chern--Simons invariant of its complement vanishes, this example does not exhibit the nontrivial imaginary part which is the main feature of the complexification considered in this paper.

We then study the knot $5_2$ in detail. This is the first example among the knots considered here for which a nonzero Chern--Simons term appears, and we use this example to explain the computational procedure. Finally, we apply the same method to the hyperbolic knots with six crossings, namely $6_1$, $6_2$, and $6_3$.

For convenience, put
$$
Q_{K,n}
:=
\frac{\pi}{2}
\log
\left(
\frac{
A_{K,n-2}(1)A_{K,n+2}(1)
}{
A_{K,n}(1)^2
}
\right).
$$
We briefly recall why this second-difference expression is used. If one directly computes
$$
\frac{4\pi}{n^2}\log A_{K,n}(1)
$$
using the principal branch of the logarithm, 
then
\[
\left|
\operatorname{Im}
\left(
\frac{4\pi}{n^2}\log A_{K,n}(1)
\right)
\right|
\leq
\frac{4\pi^2}{n^2}.
\]
Hence its imaginary part necessarily tends to zero as $n\to\infty$.
 Thus the principal logarithm cannot retain a nonzero asymptotic Chern--Simons term.

On the other hand, if one formally expects
$$
A_{K,n}(1)
\sim
\exp
\left(
\frac{n^2}{4\pi}
\{
\operatorname{Vol}(S^3\setminus K)+ic
\}
\right),
$$
then
$$
\frac{
A_{K,n-2}(1)A_{K,n+2}(1)
}{
A_{K,n}(1)^2
}
\sim
\exp
\left(
\frac{2}{\pi}
\{
\operatorname{Vol}(S^3\setminus K)+ic
\}
\right).
$$
This motivates the definition of $Q_{K,n}$.

The even- and odd-dimensional sequences are considered separately, so that $n-2$, $n$, and $n+2$ always have the same parity. As explained in the Introduction, the ambiguity of the logarithm changes $Q_{K,n}$ by an element of $i\pi^2\mathbb Z$. We therefore regard $Q_{K,n}$ as an element of
$
\bC/i\pi^2\bZ.
$

For the comparison with the complex volume, we use Yokota's
potential-function method associated with ideal triangulations of knot
complements \cite{Yokota}.
The edge-gluing and completeness equations generally admit several
algebraic solutions. We choose the solution corresponding to the
complete hyperbolic structure. In our computations, this geometric
solution is identified by the maximal positive oriented volume computed
using the Bloch--Wigner dilogarithm \cite{Cho}.
The complex volume is then obtained from the potential-function formula
for this geometric solution \cite{Yokota,ChoMurakami}.

The numerical computations were carried out using higher-precision approximations of the geometric roots than those displayed below.

\subsection{The figure-eight knot}

We first consider the figure-eight knot $4_1$. Let $K$ be the knot $4_1$.
This example is included mainly as a preliminary consistency check. The complement of $4_1$ is amphicheiral, and its Chern--Simons invariant vanishes. Hence its complex volume is represented by the real number
$$
\mathbb V(K)
=
\operatorname{Vol}(S^3\setminus K)
=
2.0298832128\ldots .
$$

For the even-dimensional representations, the values of $\Delta_{K,\rho_{n}}(t)$ at $t=1$ are
$$
-2,\quad
4,\quad
-98,\quad
9604,\quad
-3225800,\quad
3902251024,\quad
-17358727595552,\quad
281300810154928144
$$
for $n=2,4,6,8,10,12,14,16$, respectively. 
For the odd-dimensional representations, $\Delta_{K,\rho_n}(t)$ has a factor $t-1$. After removing this factor, the values
$
\left.
\frac{\Delta_{K,\rho_n}(t)}{t-1}
\right|_{t=1}$
are
$$
3,\quad
-28,\quad
1440,\quad
-260736,\quad
165110400,\quad
-383313934080,\quad
3257341296168960,\quad
-100636318520821923840
$$
for $n=3,5,7,9,11,13,15,17$, respectively. 
Using these values, we obtain the following values of $Q_{K,n}$.

\newpage

\begin{table}[ht]%4_1の計算結果の表
\begin{center} 
\begin{tabular}{|c|ll||c|ll|} 
\hline
\multicolumn{1}{|c|}{$n$} & \multicolumn{2}{c||}{$Q_{K,n}$} &\multicolumn{1}{|c|}{$n$} & \multicolumn{2}{c|}{$Q_{K,n}$}\\ \hline\hline
$4$ & $3.93567...$ &  & $5$ & $2.68072...$ & \\ \hline
$6$ & $2.17759...$ &  & $7$ & $1.97712...$ & \\ \hline
$8$ & $1.93489...$ &  & $9$ & $1.96663...$ & \\ \hline
$10$ & $2.01277...$ & & $11$ & $2.04067...$ & \\ \hline
$12$ & $2.04544...$ &  & $13$ & $2.03822...$ & \\ \hline
$14$ & $2.03071...$ &  & $15$ & $2.02757...$ & \\ \hline
\end{tabular}
\caption{Numerical values of $Q_{K,n}$ for the knot $4_1$.} \label{value 4_1}
\end{center}
\end{table}

The values oscillate around
$$
\operatorname{Vol}(S^3\setminus K)
=
2.0298832128\ldots
$$
and appear to approach it. Moreover, the quantities occurring in this computation are real, so that no nontrivial imaginary part appears. This is consistent with
$$
2\pi^2\operatorname{CS}(S^3\setminus K)
\equiv0\pmod{\pi^2}.
$$

Thus the figure-eight knot gives a simple example consistent with our conjecture. However, it does not test the essential new feature of the complexified formula, namely the appearance of a nonzero Chern--Simons term. We therefore turn to the knot $5_2$.

\subsection{The hyperbolic knot with five crossings}

Let $K$ be the knot $5_2$. We explain the computation in detail in this subsection.

A Wirtinger presentation of the knot group is
$$
G(K)
=
\langle
a,b
\mid
b[a,b][a,b]=[a,b]a
\rangle,
$$
where
$$
[a,b]=aba^{-1}b^{-1},
$$
and the generators $a$ and $b$ correspond to the loops shown in Figure~\ref{5_2}.
\vspace{-5mm}
\begin{figure}[htbp]
\centering
\captionsetup{skip=-10pt}
\includegraphics[width=4cm]{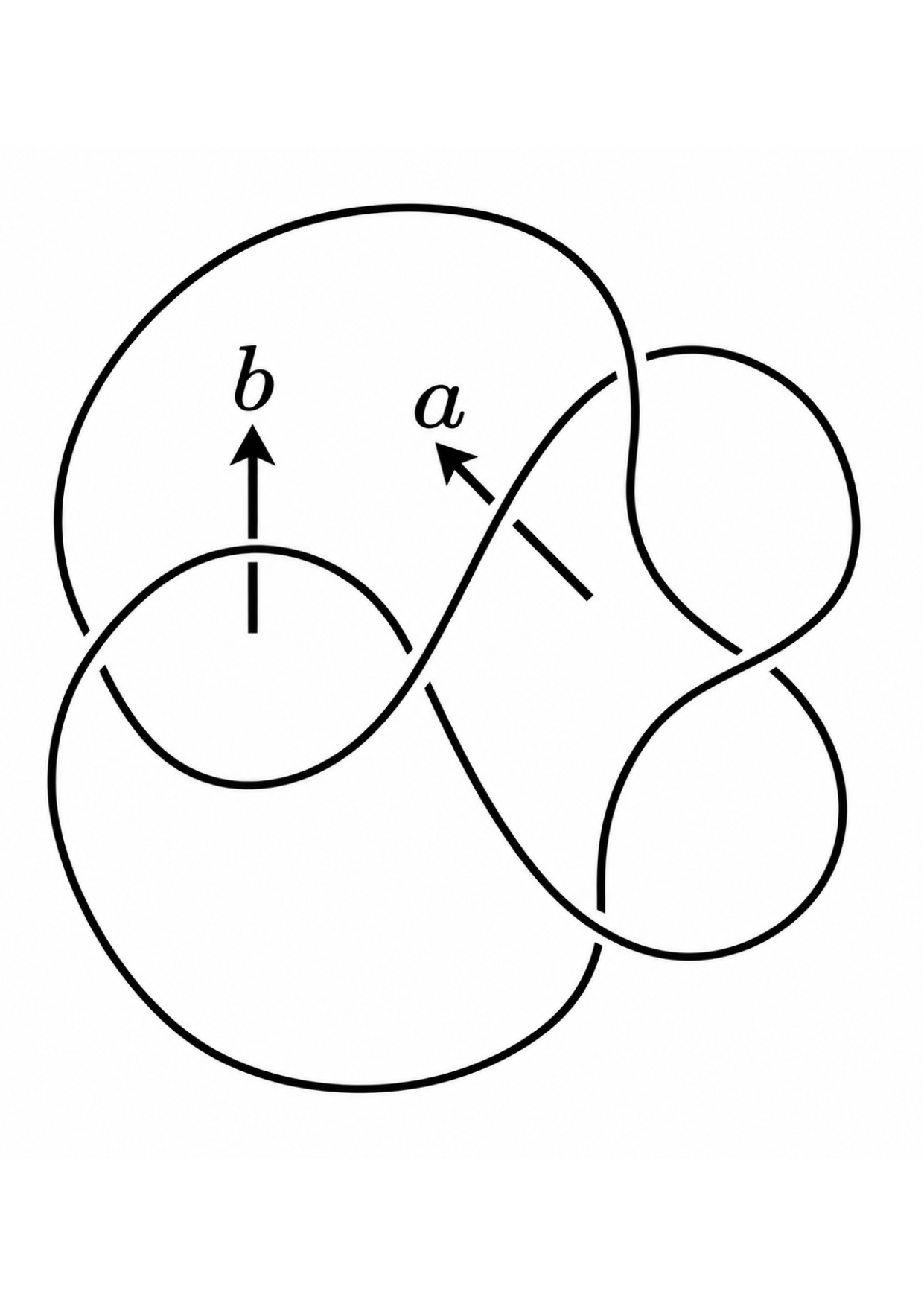}
\caption{The knot $5_2$.}
\label{5_2}
\end{figure}

We use the following lift of the holonomy representation:
$$
\rho:G(K)\longrightarrow SL(2,\bC),
$$

$$
\rho(a)
=
\begin{pmatrix}
1&0\\
1&1
\end{pmatrix},
\qquad
\rho(b)
=
\begin{pmatrix}
1&-1/x^2\\
0&1
\end{pmatrix},
$$
where $x$ is the root of
\[
x^3-x-1=0
\]
corresponding to the geometric representation, numerically given by
\[
x\approx -0.66235+0.56227i.
\]

Let
$$
\rho_n=\sigma_n\circ\rho.
$$
By Lemma~\ref{rho_n}, we have
$$
\rho_n(a)=A_n=[a_{ij}],
\qquad
\rho_n(b)=B_n=[b_{ij}],
$$
where
$$
a_{ij}
=
\begin{cases}
\displaystyle
\frac{(j-1)!}{(i-1)!(j-i)!}(-1)^{i+j},
& \text{if }i\leq j,\\[3mm]
0,
& \text{if }i>j,
\end{cases}
$$
and
$$
b_{ij}
=
\begin{cases}
0,
& \text{if }i<j,\\[2mm]
\displaystyle
\frac{(n-j)!}{(n-i)!(i-j)!}
x^{-2(i-j)},
& \text{if }i\geq j.
\end{cases}
$$
Put
$$
AB_n
:=
[A_n,B_n]
=
A_nB_nA_n^{-1}B_n^{-1}.
$$
The twisted Alexander polynomial associated with $\rho_n$ is written as
$$
\Delta_{K,\rho_n}(t)
=
\frac{N_n(t)}{D_n(t)},
$$
where
$$
N_n(t)
=
\det\bigl(
-t^{-1}B_n^{-1}(AB_n+E_n)+B_n^{-1}AB_nB_n+E_n+AB_n\\
-t(AB_n+E_n)AB_nB_n
\bigr)
$$
and
$$
D_n(t)
=
\det(tA_n-E_n).
$$
Since $A_n$ is upper triangular with all diagonal entries equal to $1$, we have
$$
D_n(t)=(t-1)^n.
$$

By the following, we can compute $Q_{K,n}$ directly from the numerators $N_n(t)$. 

\begin{prop}
There is an equality
$$
Q_{K,n}
=
\frac{\pi}{2}
\log
\left(
\frac{
N_{n-2}(1)N_{n+2}(1)
}{
N_n(1)^2
}
\right).
$$
\end{prop}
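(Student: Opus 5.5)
The plan is to prove the equality first as an identity of rational functions in $t$, before any evaluation at $t=1$, and then to check that evaluating at $t=1$ is legitimate on both sides in the same sense in which $A_{K,n}(1)$ is defined. Fix the parity of $n$ and let $m\in\{2,3\}$ be the matching index, so that $A_{K,j}(t)=\Delta_{K,\rho_j}(t)/\Delta_{K,\rho_m}(t)$ for $j=n-2,n,n+2$. I read $A_{K,m}$ as identically $1$ when $n-2=m$, which is consistent with the same formula.

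\textbf{Step 1: the identity in $\bC(t)$.} Using $\Delta_{K,\rho_j}(t)=N_j(t)/D_j(t)$ and $D_j(t)=(t-1)^j$ (since $A_j$ is unipotent upper triangular), we get
\[
\frac{A_{K,n-2}(t)A_{K,n+2}(t)}{A_{K,n}(t)^2}
=\frac{\Delta_{K,\rho_{n-2}}(t)\Delta_{K,\rho_{n+2}}(t)}{\Delta_{K,\rho_n}(t)^2}\cdot\frac{\Delta_{K,\rho_m}(t)^2}{\Delta_{K,\rho_m}(t)\Delta_{K,\rho_m}(t)}
=\frac{N_{n-2}(t)N_{n+2}(t)}{N_n(t)^2}\,(t-1)^{2n-(n-2)-(n+2)}.
\]
The normalizing factor $\Delta_{K,\rho_m}$ cancels because the second difference has total weight $1+1-2=0$. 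The power of $t-1$ cancels because $(n-2)+(n+2)-2n=0$, which is the linear counterpart of the identity $(n-2)^2+(n+2)^2-2n^2=8$ used in the Introduction. The monomial ambiguity in Wada's invariant has also disappeared, since we use the fixed presentation and fixed lift throughout. So the two rational functions coincide.

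\textbf{Step 2: evaluation at $t=1$.} This is the only delicate point, and I expect it to be the main obstacle. The individual values $N_j(1)$ may vanish. Indeed, whenever $\Delta_{K,\rho_j}(1)$ is finite, $N_j=(t-1)^j\Delta_{K,\rho_j}$ vanishes at $1$. So the right-hand side of the Proposition must be read the same way $A_{K,n}(1)$ is read in the Introduction: as the value at $t=1$ of the rational function after cancelling common factors of $t-1$. I would argue as follows. The left-hand side of Step 1 is regular and nonzero at $t=1$, because each $A_{K,j}(1)$ is a finite nonzero number by the definition of $A_{K,j}(1)$ and by the nonvanishing implicit in Goda's formula \eqref{eq Goda}. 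Since the two sides are equal in $\bC(t)$, the right-hand side then has order of vanishing zero at $t=1$, and its value there equals $A_{K,n-2}(1)A_{K,n+2}(1)/A_{K,n}(1)^2$.

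\textbf{Step 3: applying the logarithm.} The arguments of $\log$ on the two sides are literally the same complex number. Hence $Q_{K,n}$ agrees with the stated expression for any fixed branch, and in particular as elements of $\bC/i\pi^2\bZ$. I would close with a remark that in practice $N_j(1)$ is obtained by dividing $N_j(t)$ by the appropriate power of $t-1$ first, or equivalently by taking leading coefficients at $t=1$. This matches how the odd-dimensional values were handled for $4_1$.
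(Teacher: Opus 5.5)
Your proposal is correct and follows the same route as the paper. You cancel the common normalizing factor $\Delta_{K,\rho_2}$ or $\Delta_{K,\rho_3}$ (total weight $1+1-2=0$) and the powers $(t-1)^j$ (since $(n-2)+(n+2)-2n=0$) as an identity of rational functions, and then pass to $t=1$. Your Step 2 is more careful than the paper's one-line ``substituting $t=1$'': you rightly note that the individual values $N_j(1)$ vanish, so the right-hand side has to be read as the value at $t=1$ of the reduced rational function, just as $A_{K,n}(1)$ is.
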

\begin{proof}
Recall that
$$
A_{K,2k}(t)
=
\frac{\Delta_{K,\rho_{2k}}(t)}
     {\Delta_{K,\rho_2}(t)}
$$
and
$$
A_{K,2k+1}(t)
=
\frac{\Delta_{K,\rho_{2k+1}}(t)}
     {\Delta_{K,\rho_3}(t)}.
$$
Because $n-2$, $n$, and $n+2$ have the same parity, the normalization factors cancel in
$$
\frac{
A_{K,n-2}(t)A_{K,n+2}(t)
}{
A_{K,n}(t)^2
}.
$$
The factors $(t-1)^n$ also cancel.
Consequently, we have 
$$
\frac{
A_{K,n-2}(t)A_{K,n+2}(t)
}{
A_{K,n}(t)^2
}
=
\frac{
N_{n-2}(t)N_{n+2}(t)
}{
N_n(t)^2
}.
$$
Substituting $t=1$  into this equation gives the desired equation.
\end{proof}

The numerical values obtained using Mathematica are shown in Table~\ref{value 5_2}. 

\begin{table}[H]%5_2の計算結果の表
\begin{center} 
\begin{tabular}{|c|ll||c|ll|} 
\hline
\multicolumn{1}{|c|}{$n$} & \multicolumn{2}{c||}{$Q_{K,n}$} &\multicolumn{1}{|c|}{$n$} & \multicolumn{2}{c|}{$Q_{K,n}$}\\ \hline\hline
$4$ & $3.90921...$ & + $(1.21314...) i$ & $5$ & $2.06464...$ & + $(2.28856...) i$\\ \hline
$6$ & $2.00009...$ & + $(3.60568...) i$ & $7$ & $3.12694...$ & + $(3.86417...) i$\\ \hline
$8$ & $3.52256...$ & + $(2.85486...) i$ & $9$ & $2.7451... $ & + $(2.46852...) i$\\ \hline
$10$ & $2.41642...$ & + $(3.03596...) i$ & $11$ & $2.79327...$ & + $(3.31223...) i$\\ \hline
$12$ & $3.03141...$ & + $(3.09112...) i$ & $13$ & $2.90802...$ & + $(2.88221...) i$\\ \hline
$14$ & $2.73082...$ & + $(2.94575...) i$ & $15$ & $2.75758...$ & + $(3.08777...) i$\\ \hline
$16$ & $2.86697... $ & + $(3.08451..) i$ & $17$ & $2.87796... $ & + $ (3.00243...) i$\\ \hline
$18$ & $2.81791... $ & + $ (2.98433...) i$ & $19$ & $2.79732... $ & + $ (3.02695...) i$\\ \hline
$20$ & $2.82645... $ & + $ (3.04734...) i$ & $21$ & $2.84518... $ & + $(3.02826...) i$\\ \hline
$22$ & $2.83331... $ & + $(3.01192...) i$ & $23$ & $2.81966... $ & + $(3.01876...) i$\\ \hline
$24$ & $2.82309... $ & + $ (3.02978..) i$& $25$ & $2.83171... $ & + $ (3.02857...) i$\\ \hline
$26$ & $2.83186...$ & + $ (3.02199...) i$ & $27$ & $2.82699... $ & + $(3.02108...) i$\\  \hline
$28$ & $2.82571... $ & + $ (3.02459...) i$ & & &\\ \hline
\end{tabular}
\caption{Numerical values of $Q_{K,n}$ for the knot $5_2$.} \label{value 5_2}
\end{center}
\end{table}

Using the potential-function computation described above, we obtain
\[
\operatorname{Vol}(S^3\setminus K)
=
2.82812\ldots
\]
and
\[
2\pi^2\operatorname{CS}(S^3\setminus K)
\equiv
3.02413\ldots
\pmod{\pi^2}.
\]
The numerical values in Table~\ref{value 5_2} therefore suggest that, for each parity,
$$
\lim_{n\to\infty}Q_{K,n}
=
\operatorname{Vol}(S^3\setminus K)
+
i\,2\pi^2\operatorname{CS}(S^3\setminus K)
\quad
\text{in }\bC/i\pi^2\bZ.
$$
To make the convergence more explicit, we also compute
$$
Q_{K,n}-\mathbb V(K).
$$
The resulting values are shown in Table~\ref{difference 5_2}.

\begin{table}[ht]
\begin{center}
\begin{tabular}{|c|ll||c|ll|}
\hline
\multicolumn{1}{|c|}{$n$}
& \multicolumn{2}{c||}{$Q_{K,n}-\mathbb V(K)$}
& \multicolumn{1}{|c|}{$n$}
& \multicolumn{2}{c|}{$Q_{K,n}-\mathbb V(K)$}
\\ \hline\hline

$4$  & $+1.08109...$  & $-(1.81099...)i$
& $5$  & $-0.76348...$ & $-(0.73557...)i$ \\ \hline

$6$  & $-0.82803...$ & $+(0.58155...)i$
& $7$  & $+0.29882...$  & $+(0.84004...)i$ \\ \hline

$8$  & $+0.69444...$  & $-(0.16927...)i$
& $9$  & $-0.08302...$ & $-(0.55561...)i$ \\ \hline

$10$ & $-0.41170...$ & $+(0.01183...)i$
& $11$ & $-0.03485...$ & $+(0.28810...)i$ \\ \hline

$12$ & $+0.20329...$  & $+(0.06699...)i$
& $13$ & $+0.07990...$  & $-(0.14192...)i$ \\ \hline

$14$ & $-0.09730...$ & $-(0.07838...)i$
& $15$ & $-0.07054...$ & $+(0.06364...)i$ \\ \hline

$16$ & $+0.03885...$  & $+(0.06038...)i$
& $17$ & $+0.04984...$  & $-(0.02170...)i$ \\ \hline

$18$ & $-0.01021...$ & $-(0.03980...)i$
& $19$ & $-0.03080...$ & $+(0.00282...)i$ \\ \hline

$20$ & $-0.00167...$ & $+(0.02321...)i$
& $21$ & $+0.01706...$  & $+(0.00413...)i$ \\ \hline

$22$ & $+0.00519...$  & $-(0.01221...)i$
& $23$ & $-0.00846...$ & $-(0.00537...)i$ \\ \hline

$24$ & $-0.00503...$ & $+(0.00565...)i$
& $25$ & $+0.00359...$  & $+(0.00444...)i$ \\ \hline

$26$ & $+0.00374...$  & $-(0.00214...)i$
& $27$ & $-0.00113...$ & $-(0.00305...)i$ \\ \hline

$28$ & $-0.00241...$ & $+(0.00046...)i$
& & & \\ \hline

\end{tabular}
\caption{Differences between $Q_{K,n}$ and the complex volume $\mathbb V(K)$ for the knot $5_2$.}
\label{difference 5_2}
\end{center}
\end{table}

In contrast to the figure-eight knot, the imaginary parts in Table~\ref{value 5_2} are nontrivial and appear to approach the Chern--Simons term. This is the first example in our computations in which the complex nature of the conjectured limit becomes visible.

\subsection{Hyperbolic knots with six crossings}

We next apply the same procedure to the three hyperbolic knots with six crossings, $6_1$, $6_2$, and $6_3$. Since the computational method was described in detail for $5_2$, we give only the data required for each knot and the resulting numerical values.

\subsubsection{The knot $6_1$}
\vspace{-10mm}
\begin{figure}[htbp]
\centering
\captionsetup{skip=-10pt}
\includegraphics[width=4.5cm]{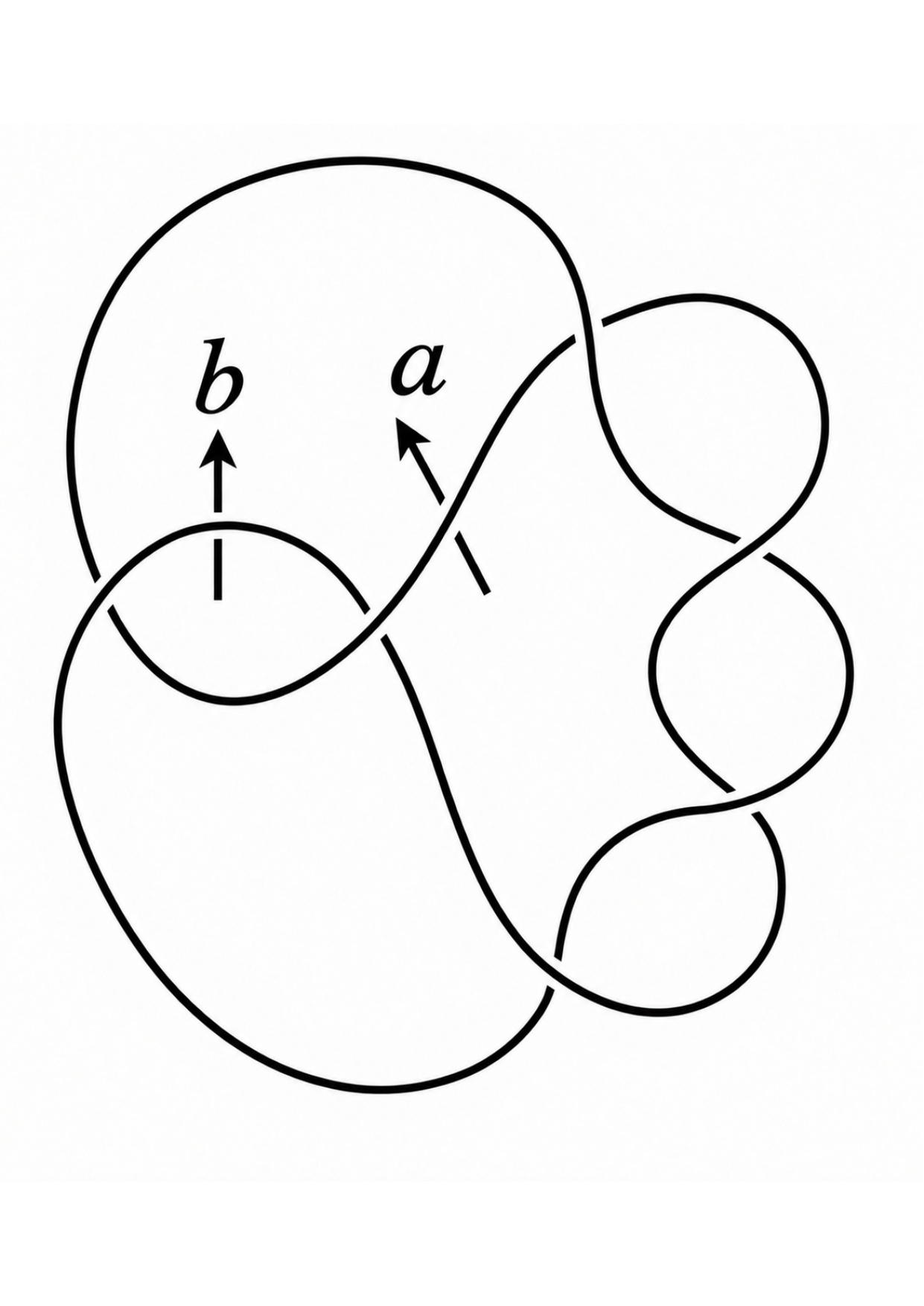}
\caption{The knot $6_1$.}
\label{6_1}
\end{figure}

Let $K$ be the knot $6_1$. A Wirtinger presentation is
$$
G(K)
=
\langle
a,b
\mid
[a,b][a,b]
=
b[a,b][a,b]a
\rangle.
$$
We use the lift of the holonomy representation
$$
\rho(a)
=
\begin{pmatrix}
1&0\\
1&1
\end{pmatrix},
\qquad
\rho(b)
=
\begin{pmatrix}
1&-\dfrac{x-1}{x}\\
0&1
\end{pmatrix},
$$
where $x$ denotes the root of
$$
2x^4-5x^3+6x^2-3x+1=0
$$
corresponding to the geometric representation,numerically given by
\[
x\approx 0.27872+0.48342i.
\]

Writing
$$
\rho_n(a)=A_n=[a_{ij}],
\qquad
\rho_n(b)=B_n=[b_{ij}],
$$
we have
$$
a_{ij}
=
\begin{cases}
\displaystyle
\frac{(j-1)!}{(i-1)!(j-i)!}(-1)^{i+j},
& \text{if }i\leq j,\\[3mm]
0,
& \text{if }i>j,
\end{cases}
$$
and
$$
b_{ij}
=
\begin{cases}
0,
& \text{if }i<j,\\[2mm]
\displaystyle
\frac{(n-j)!}{(n-i)!(i-j)!}
\left(\frac{x-1}{x}\right)^{i-j},
& \text{if }i\geq j.
\end{cases}
$$
Put
$$
AB_n=A_nB_nA_n^{-1}B_n^{-1}.
$$
Then
$$
\Delta_{K,\rho_n}(t)
=
\frac{N_n(t)}{(t-1)^n},
$$
where
$$
N_n(t)
=
\det
\left(
(E_n-tB_n)
(E_n+AB_n)
(E_n-tAB_nB_n)
-
tB_nAB_nAB_n
\right).
$$
Hence $Q_{K,n}$ is computed from
$$
Q_{K,n}
=
\frac{\pi}{2}
\log
\left(
\frac{
N_{n-2}(1)N_{n+2}(1)
}{
N_n(1)^2
}
\right).
$$
The numerical values of $Q_{K,n}$ are shown in Table~\ref{value 6_1}.

\begin{table}[ht]%6_1の計算結果の表
\begin{center} 
\begin{tabular}{|c|ll||c|ll|} 
\hline
\multicolumn{1}{|c|}{$n$} & \multicolumn{2}{c||}{$Q_{K,n}$} &\multicolumn{1}{|c|}{$n$} & \multicolumn{2}{c|}{$Q_{K,n}$}\\ \hline\hline
$4$ & $1.10035...$ & $+ (4.78702...) i$ &  $5$ & $1.84117...$ & $- (1.11143...) i $\\ \hline
$6$ & $5.56439...$ &  $- (1.99856...) i$ &   $7$ & $3.43741...$ & $- (4.81277...) i$\\ \hline
$8$ & $1.4012...$ & $- (3.29514...) i$ &  $9$ & $3.50659...$ & $- (1.63977...) i$\\ \hline
$10$ & $4.19526...$ & $- (3.53126...) i$ &  $11$ & $2.66498...$ & $- (3.95452...) i$\\ \hline
$12$ & $2.53582...$ & $- (2.44756...) i$ &  $13$ & $3.771...$ & $- (2.68371...) i$\\ \hline
$14$ & $3.40126...$ & $- (3.63418...) i$ &  $15$ & $2.64798...$ & $- (3.19445...) i$\\ \hline
$16$ & $3.16741...$ & $- (2.62951...) i$ & $17$ & $3.52798...$ & $- (3.15493...) i$\\ \hline
$18$ & $3.04638...$ & $ - (3.37271...) i$ & $19$ & $2.93679...$ & $ - (2.92699...) i$\\ \hline
$20$ & $3.32811...$ & $ - (2.9191...) i$ & $21$ & $3.27008...$ & $ - (3.23953...) i$\\ \hline
$22$ & $3.01150...$ & $ - (3.14172...) i$ &   $23$ & $3.13841...$ & $ - (2.9411...) i$\\ \hline
$24$ & $3.28178...$ & $ - (3.08068...) i$ &  $25$ & $3.14344... $ & $- (3.17666...) i$\\ \hline
$26$ & $3.08572... $ & $- (3.04515...) i$ & $27$ & $3.20519...$ & $ - (3.01979...) i$\\  \hline
$28$ & $3.20615...$ & $ - (3.12272...) i$ & & &\\ \hline
\end{tabular}
\caption{Numerical values of $Q_{K,n}$ for the knot $6_1$.} \label{value 6_1}
\end{center}
\end{table}

Since
$$
\operatorname{Vol}(S^3\setminus K)
=
3.16396\ldots
$$
and
$$
2\pi^2\operatorname{CS}(S^3\setminus K)
\equiv
-3.07886\ldots
\pmod{\pi^2},
$$
the numerical data are consistent with
$$
\lim_{n\to\infty}Q_{K,n}
=
\operatorname{Vol}(S^3\setminus K)
+
i\,2\pi^2\operatorname{CS}(S^3\setminus K)
\quad
\text{in }\bC/i\pi^2\bZ.
$$
See Table~\ref{difference 6_1} for the corresponding differences from the complex volume.

\begin{table}[ht]
\begin{center}
\begin{tabular}{|c|ll||c|ll|}
\hline
\multicolumn{1}{|c|}{$n$}
& \multicolumn{2}{c||}{$Q_{K,n}-\mathbb V(K)$}
& \multicolumn{1}{|c|}{$n$}
& \multicolumn{2}{c|}{$Q_{K,n}-\mathbb V(K)$}
\\ \hline\hline

$4$  & $-2.06361...$ & $+(7.86588...)i$
& $5$  & $-1.32279...$ & $+(1.96743...)i$ \\ \hline

$6$  & $+2.40043...$  & $+(1.08030...)i$
& $7$  & $+0.27345...$  & $-(1.73391...)i$ \\ \hline

$8$  & $-1.76276...$ & $-(0.21628...)i$
& $9$  & $+0.34263...$  & $+(1.43909...)i$ \\ \hline

$10$ & $+1.03130...$  & $-(0.45240...)i$
& $11$ & $-0.49898...$ & $-(0.87566...)i$ \\ \hline

$12$ & $-0.62814...$ & $+(0.63130...)i$
& $13$ & $+0.60704...$  & $+(0.39515...)i$ \\ \hline

$14$ & $+0.23730...$  & $-(0.55532...)i$
& $15$ & $-0.51598...$ & $-(0.11559...)i$ \\ \hline

$16$ & $+0.00345...$  & $+(0.44935...)i$
& $17$ & $+0.36402...$  & $-(0.07607...)i$ \\ \hline

$18$ & $-0.11758...$ & $-(0.29385...)i$
& $19$ & $-0.22717...$ & $+(0.15187...)i$ \\ \hline

$20$ & $+0.16415...$  & $+(0.15976...)i$
& $21$ & $+0.10612...$  & $-(0.16067...)i$ \\ \hline

$22$ & $-0.15246...$ & $-(0.06286...)i$
& $23$ & $-0.02555...$ & $+(0.13776...)i$ \\ \hline

$24$ & $+0.11782...$  & $-(0.00182...)i$
& $25$ & $-0.02052...$ & $-(0.09780...)i$ \\ \hline

$26$ & $-0.07824...$ & $+(0.03371...)i$
& $27$ & $+0.04123...$  & $+(0.05907...)i$ \\ \hline

$28$ & $+0.04219...$  & $-(0.04386...)i$
& & & \\ \hline

\end{tabular}
\caption{Differences between $Q_{K,n}$ and the complex volume
$\mathbb V(K)$ for the knot $6_1$.}
\label{difference 6_1}
\end{center}
\end{table}

\subsubsection{The knot $6_2$}
In this subsection, we use the diagram of \(6_2\) shown in Figure~\ref{6_2}, which is the mirror image of the diagram appearing in Rolfsen's knot table \cite{Rolfsen}.\vspace{-0mm}

\begin{figure}[htbp]
\centering
\captionsetup{skip=10pt}
\hspace{0mm}
\includegraphics[width=4.2cm]{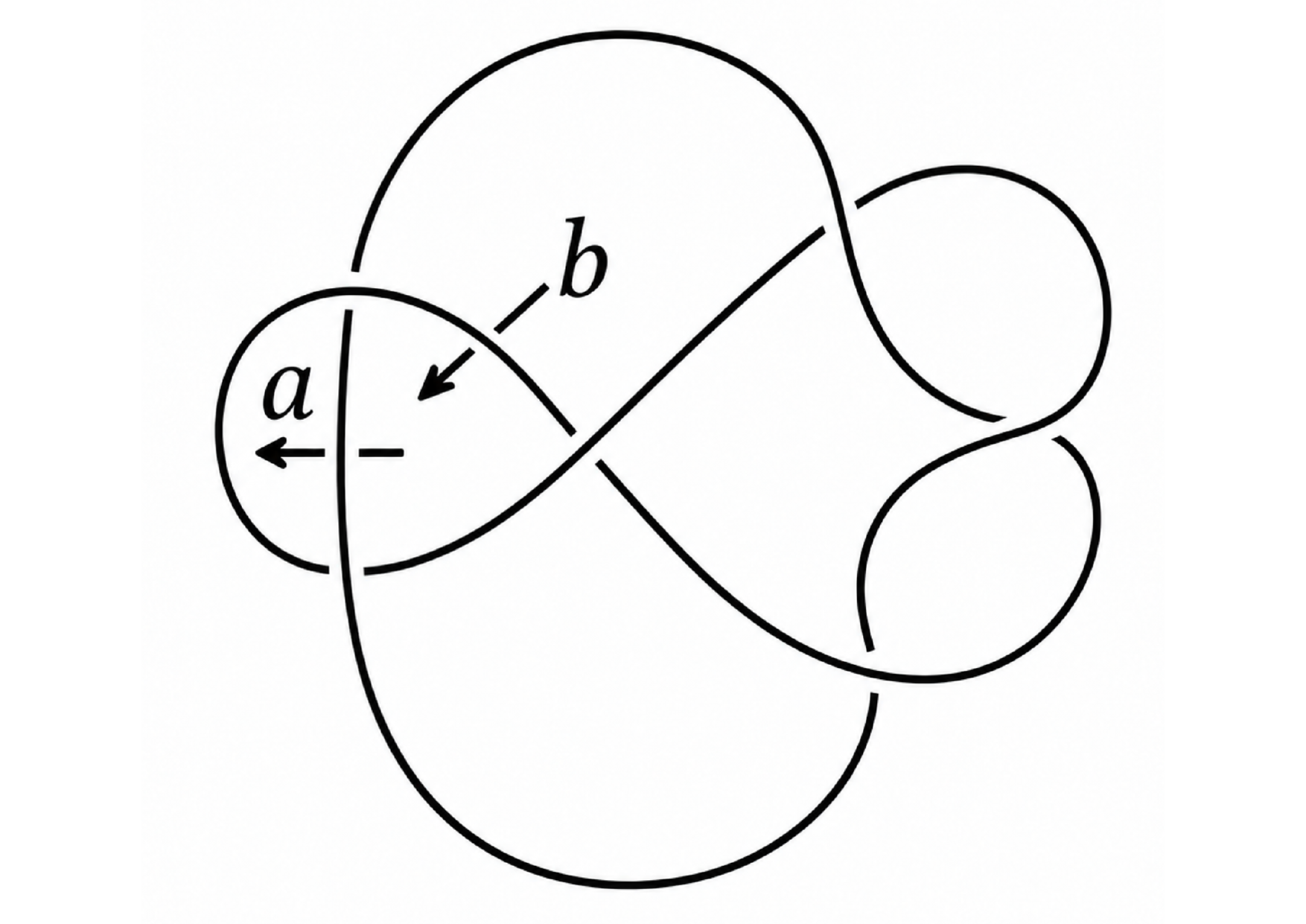}
\caption{The knot $6_2$.}
\label{6_2}
\end{figure}

Let $K$ be the knot $6_2$.
A Wirtinger presentation is
$$
G(K)
=
\left\langle
a,b
\ \middle|\
(aba^{-1})(bab^{-1})(aba^{-1})b
=
(bab^{-1})(aba^{-1})(bab^{-1})(aba^{-1})
\right\rangle.
$$
We use
$$
\rho(a)
=
\begin{pmatrix}
1&1\\
0&1
\end{pmatrix},
\qquad
\rho(b)
=
\begin{pmatrix}
1&0\\[1mm]
\dfrac{x^2}{(x-1)(1+x)^2}&1
\end{pmatrix},
$$
where $x$ denotes the root of
$$
1+2x-x^2-2x^3+x^5=0
$$
corresponding to the geometric representation, numerically given by
\[
x\approx -0.96491+0.62189i.
\]

Writing
$$
\rho_n(a)=A_n=[a_{ij}],
\qquad
\rho_n(b)=B_n=[b_{ij}],
$$
we have
$$
a_{ij}
=
\begin{cases}
0,
& \text{if }i<j,\\[2mm]
\displaystyle
\frac{(n-j)!}{(n-i)!(i-j)!}(-1)^{i+j},
& \text{if }i\geq j,
\end{cases}
$$
and
$$
b_{ij}
=
\begin{cases}
\displaystyle
\frac{(j-1)!}{(i-1)!(j-i)!}
\left(
\frac{x^2}{(1-x)(1+x)^2}
\right)^{j-i},
& \text{if }i\leq j,\\[3mm]
0,
& \text{if }i>j.
\end{cases}
$$

Put
$$
C_n=A_nB_nA_n^{-1},
\qquad
D_n=B_nA_nB_n^{-1}.
$$
Then
$$
\Delta_{K,\rho_n}(t)
=
\frac{N_n(t)}{(t-1)^n},
$$
where
$$
N_n(t)
=
\det
\left(
(E_n-tD_n)
(E_n+t^2C_nD_n)
(E_n+B_nA_n^{-1}-tC_n)
-
B_nA_n^{-1}
\right).
$$
The numerical values of $Q_{K,n}$ are shown in Table~\ref{value 6_2}.

\begin{table}[ht]%6_2の計算結果の表
\begin{center} 
\begin{tabular}{|c|ll||c|ll|} 
\hline
\multicolumn{1}{|c|}{$n$} & \multicolumn{2}{c||}{$Q_{K,n}$ } &\multicolumn{1}{|c|}{$n$} & \multicolumn{2}{c|}{$Q_{K,n}$ }\\ \hline\hline
$4$ & $4.68241...$ & $ + (3.26794...) i$ &  $5$ & $3.07457...$ & $ - (4.44537...) i$\\ \hline
$6$ & $4.04742...$ & $ - (3.09489...) i$ &   $7$ & $5.0858...$ & $ - ( 3.76851...) i$ \\ \hline
$8$ & $4.5338...$ & $ - (4.5069...) i$ &  $9$ & $4.01741...$ & $ - (4.06763...) i$\\ \hline
$10$ & $4.36261...$ & $ - (3.71779...) i$ &  $11$ & $4.60593...$ & $ - (3.96618...) i$\\ \hline
$12$ & $4.42781...$ & $ - (4.15257...) i$ &  $13$ & $4.28296...$ & $ - (4.01862...) i$\\ \hline
$14$ & $4.38351... $ & $- (3.90932...) i$ & $15$ & $4.4656... $ & $- (3.98257...) i$\\ \hline
$16$ & $4.41281... $ & $- (4.04489...) i$ & $17$ & $4.36561... $ & $- (4.00665...) i$\\ \hline
$18$ & $4.39322... $ & $- (3.97128...) i$ & & &\\ \hline
\end{tabular}
\caption{Numerical values of $Q_{K,n}$ for the knot $6_2$.} \label{value 6_2}
\end{center}
\end{table}

Since
$$
\operatorname{Vol}(S^3\setminus K)
=
4.40083\ldots
$$
and
$$
2\pi^2\operatorname{CS}(S^3\setminus K)
\equiv
-3.99704\ldots
\pmod{\pi^2},
$$
the numerical values again agree with the expected complex volume and are consistent with
$$
\lim_{n\to\infty}Q_{K,n}
=
\operatorname{Vol}(S^3\setminus K)
+
i\,2\pi^2\operatorname{CS}(S^3\setminus K)
\quad
\text{in }\bC/i\pi^2\bZ.
$$
See Table~\ref{difference 6_2} for the corresponding differences from the complex volume.
Note that our diagram is the mirror image of the representative in Rolfsen's table. This convention is relevant to the sign of the imaginary part of the complex volume.

\begin{table}[ht]
\begin{center}
\begin{tabular}{|c|ll||c|ll|}
\hline
\multicolumn{1}{|c|}{$n$}
& \multicolumn{2}{c||}{$Q_{K,n}-\mathbb V(K)$}
& \multicolumn{1}{|c|}{$n$}
& \multicolumn{2}{c|}{$Q_{K,n}-\mathbb V(K)$}
\\ \hline\hline

$4$  & $+0.28158...$ & $+(7.26498...)i$
& $5$  & $-1.32626...$ & $-(0.44833...)i$ \\ \hline

$6$  & $-0.35341...$ & $+(0.90215...)i$
& $7$  & $+0.68497...$ & $+(0.22853...)i$ \\ \hline

$8$  & $+0.13297...$ & $-(0.50986...)i$
& $9$  & $-0.38342...$ & $-(0.07059...)i$ \\ \hline

$10$ & $-0.03822...$ & $+(0.27925...)i$
& $11$ & $+0.20510...$ & $+(0.03086...)i$ \\ \hline

$12$ & $+0.02698...$ & $-(0.15553...)i$
& $13$ & $-0.11787...$ & $-(0.02158...)i$ \\ \hline

$14$ & $-0.01732...$ & $+(0.08772...)i$
& $15$ & $+0.06477...$ & $+(0.01447...)i$ \\ \hline

$16$ & $+0.01198...$ & $-(0.04785...)i$
& $17$ & $-0.03522...$ & $-(0.00961...)i$ \\ \hline

$18$ & $-0.00761...$ & $+(0.02576...)i$
& & & \\ \hline

\end{tabular}
\caption{Differences between $Q_{K,n}$ and the complex volume
$\mathbb V(K)$ for the knot $6_2$.}
\label{difference 6_2}
\end{center}
\end{table}

\subsubsection{The knot $6_3$}
We finally consider the knot $6_3$. 
In this subsection, we use the diagram of \(6_3\) shown in Figure~\ref{6_3}, which is the mirror image of the diagram appearing in Rolfsen's knot table\cite{Rolfsen}.

\begin{figure}[htbp]
\centering
\captionsetup{skip=-10pt}
\hspace{-5mm}
\includegraphics[width=5cm]{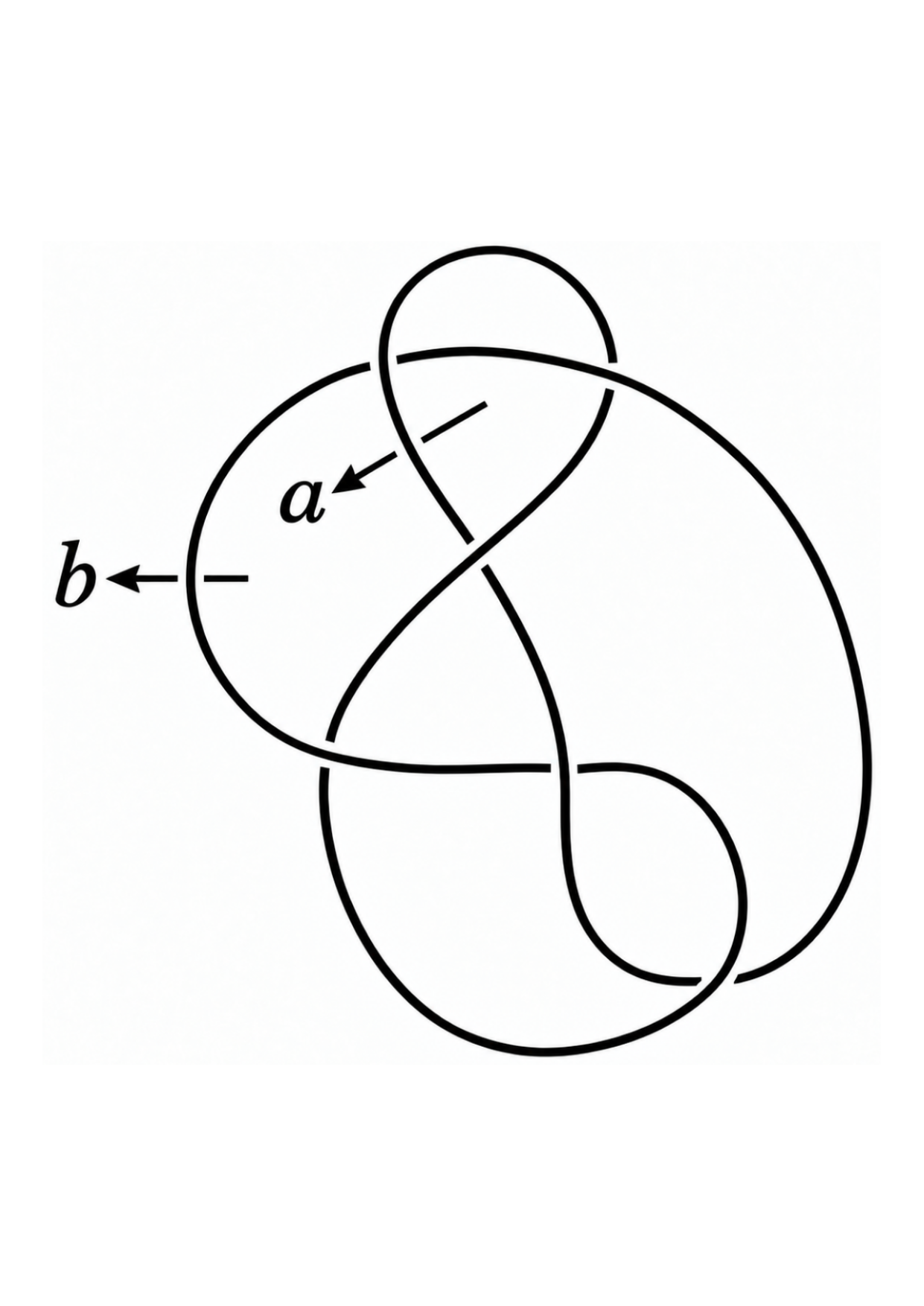}
\caption{The knot $6_3$.}
\label{6_3}
\end{figure}

Let $K$ be the knot $6_3$.
A Wirtinger presentation is
$$
G(K)
=
\left\langle
a,b
\ \middle|\
[b^{-1},a^{-1}]b[a^{-1},b^{-1}]a b^{-1}[a^{-1},b^{-1}]
=a^{-1}[b^{-1},a^{-1}]b[a^{-1},b^{-1}]a
\right\rangle.
$$
We use
$$
\rho(a)
=
\begin{pmatrix}
1&0\\
x(1-x)&1
\end{pmatrix},
\qquad
\rho(b)
=
\begin{pmatrix}
1&-\dfrac{1}{x}\\[1mm]
0&1
\end{pmatrix},
$$
where $x$ denotes the root of
$$
x^6-3 x^5+5 x^4-6 x^3+5 x^2-2 x+1=0
$$
corresponding to the geometric representation, numerically given by
\[
x\approx 0.15883+1.20014 i.
\]
Writing
$$
\rho_n(a)=A_n=[a_{ij}],
\qquad
\rho_n(b)=B_n=[b_{ij}],
$$
we have
$$
a_{ij}
=
\begin{cases}
\displaystyle
\frac{(j-1)!}{(j-i)!(i-1)!}(x^2-x)^{j-i},
& \text{if }i\leq j,\\[2mm]
0,
& \text{if }i> j,
\end{cases}
$$
and
$$
b_{ij}
=
\begin{cases}
0,
& \text{if }i< j,\\[3mm]
\displaystyle
\frac{(n-j)!}{(i-j)!(n-i)!}
x^{j-i},
& \text{if }i\geq j.
\end{cases}
$$
Put
$$
C_n=A_n^{-1}B_n^{-1}A_nB_n,
\qquad
D_n=B_n^{-1}A_n^{-1}B_nA_n.
$$
Then
$$
\Delta_{K,\rho_n}(t)
=
\frac{N_n(t)}{(t-1)^n},
$$
where
\[
N_n(t)
=
\det\left(
\begin{aligned}
 (E_n - t^{-1} A_n^{-1})  
  D_n  (t^{-2} (t B_n - E_n) A_n^{-1} B_n^{-1} (t A_n& - E_n) - 
    t B_n C_n A_n B_n^{-1} + E_n) \\
&   - t^{-1} D_n B_n C_n A_n B_n^{-1}  A_n^{-1} B_n^{-1}
\end{aligned}
\right)
\]
The numerical values of $Q_{K,n}$ are shown in Table~\ref{value 6_3}.

\begin{table}[ht]%6_3の計算結果の表
\begin{center} 
\begin{tabular}{|c|ll||c|ll|} 
\hline
\multicolumn{1}{|c|}{$n$} & \multicolumn{2}{c||}{$Q_{K,n}$} &\multicolumn{1}{|c|}{$n$} & \multicolumn{2}{c|}{$Q_{K,n}$}\\ \hline\hline
$4$ & $7.34561... $ & $+(1.2752...)\times10^{-16} i$ & $5$ & $5.91042... $ & $+(3.26134...)\times10^{-16} i$ \\ \hline
$6$ & $5.54295...$ & $+(3.45705...)\times10^{-16} i$ & $7$ & $5.55848... $ & $+(2.30278...)\times10^{-16} i$\\ \hline
$8$ & $5.6589...$ & $+(2.36723...)\times10^{-16} i$ & $9$ & $5.71527... $ & $+(3.03229...)\times10^{-16} i$\\ \hline
$10$ & $5.71617... $ & $+(2.3070...)\times10^{-16} i$& $11$ & $5.6988...$ & $+(4.15916...)\times10^{-16} i$\\ \hline
$12$ & $5.68932... $ & $+(3.18744...)\times10^{-16} i$ & $13$ & $5.68951...$ & $+(2.55525...)\times10^{-16} i$\\ \hline
$14$ & $5.69229... $ &  $+(4.28996...)\times10^{-16} i$& $15$ & $5.69363...$ & $+(4.89375...)\times10^{-16} i$\\ \hline
\end{tabular}
\caption{Numerical values of $Q_{K,n}$ for the knot $6_3$.} \label{value 6_3}
\end{center}
\end{table}

For the knot $6_3$,
$$
\operatorname{Vol}(S^3\setminus K)
=
5.69302\ldots
$$
and
$$
2\pi^2\operatorname{CS}(S^3\setminus K)
\equiv
0
\pmod{\pi^2}.
$$
Hence its complex volume is represented by
$$
\mathbb V(K)
=
5.69302\ldots .
$$
The values in Table~\ref{value 6_3} appear to approach this number.

\begin{table}[ht]%6_3の差の計算結果の表
\begin{center} 
\begin{tabular}{|c|ll||c|ll|} 
\hline
\multicolumn{1}{|c|}{$n$} & \multicolumn{2}{c||}{$Q_{K,n}-\mathbb V(K)$} &\multicolumn{1}{|c|}{$n$} & \multicolumn{2}{c|}{$Q_{K,n}-\mathbb V(K)$}\\ \hline\hline
$4$ & $+1.65259...$ & $+(1.2752...)\times10^{-16} i$ & $5$ & $+0.2174...$ & $+(3.26134...)\times10^{-16} i$  \\ \hline
$6$ & $-0.150067...$ & $+(3.45705...)\times10^{-16} i$ & $7$ & $-0.134542...$ & $+(2.30278...)\times10^{-16} i$ \\ \hline
$8$ & $-0.0341174...$ & $+(2.36723...)\times10^{-16} i$ & $9$ & $+0.0222527...$ & $+(3.03229...)\times10^{-16} i$\\ \hline
$10$ & $+0.0231439...$ &$+(2.3070...)\times10^{-16} i$ & $11$ & $+0.00577945...$ & $+(4.15916...)\times10^{-16} i$ \\ \hline
$12$ & $-0.00370422...$ & $+(3.18744...)\times10^{-16} i$ & $13$ & $-0.00351105...$ & $+(2.55525...)\times10^{-16} i$\\ \hline
$14$ & $-0.000729367...$ & $+(4.28996...)\times10^{-16} i$ & $15$ & $+0.000607015...$ & $+(4.89375...)\times10^{-16} i$\\ \hline
\end{tabular}
\caption{Differences between $Q_{K,n}$ and the complex volume $\mathbb V(K)$ for the knot $6_3$.} \label{difference 6_3}
\end{center}
\end{table}

Table~\ref{difference 6_3} makes the convergence particularly clear. The absolute value of the real part of the difference decreases to less than $10^{-3}$ for $n=14$ and $15$. The imaginary parts are of order $10^{-16}$ throughout the computation and may be regarded as numerical errors.
Thus the numerical data for $6_3$ are consistent with
$$
\lim_{n\to\infty}Q_{K,n}
=
\mathbb V(K)
\quad
\text{in }\bC/i\pi^2\bZ.
$$

Since the Chern--Simons invariant vanishes in this case, this provides another example, complementary to $5_2$, $6_1$, and $6_2$, in which the imaginary part predicted by the complex volume is zero.

Taken together, the computations for $4_1$, $5_2$, $6_1$, $6_2$, and $6_3$ provide numerical evidence for the conjecture stated in the Introduction. In the examples with nonzero Chern--Simons invariant, both the real and imaginary parts of $Q_{K,n}$ appear to approach the corresponding components of the complex volume. In the amphicheiral examples considered here, the Chern--Simons term vanishes and the computed imaginary parts are consistent with zero.

\end{document}